\documentclass[conference]{IEEEtran}
\IEEEoverridecommandlockouts

\usepackage{cite}
\usepackage{amsmath,amssymb,amsfonts}
\usepackage{graphicx}
\usepackage{textcomp}
\usepackage{xcolor}
\usepackage{booktabs}
\usepackage{pgfplots}
\pgfplotsset{compat=1.17}
\usepackage{url}
\usepackage{amsthm}
\newtheorem{proposition}{Proposition}


\newcommand{\CostBase}{13003.67}   
\newcommand{\CostOne}{11954.67}    
\newcommand{\CostTwo}{12390.17}    
\newcommand{\CostThree}{12500.25}  
\newcommand{\CostFour}{12603.28}   
\newcommand{\CostFive}{12669.17}   
\newcommand{\CostSix}{12706.75}    

\newcommand{\VpiOne}{1049.00}
\newcommand{\VpiTwo}{613.50}
\newcommand{\VpiThree}{503.42}
\newcommand{\VpiFour}{400.39}
\newcommand{\VpiFive}{334.50}
\newcommand{\VpiSix}{296.92}

\newcommand{\PctOne}{8.07\%}
\newcommand{\PctTwo}{4.72\%}
\newcommand{\PctThree}{3.87\%}
\newcommand{\PctFour}{3.08\%}
\newcommand{\PctFive}{2.57\%}
\newcommand{\PctSix}{2.28\%}

\def\BibTeX{{\rm B\kern-.05em{\sc i\kern-.025em b}\kern-.08em
    T\kern-.1667em\lower.7ex\hbox{E}\kern-.125emX}}

\begin{document}

\title{The Value of Perfect Endpoint Forecasts for\\
Offshore-Wind Thermal Firming}

\author{\IEEEauthorblockN{Arash Khojaste}
\IEEEauthorblockA{\textit{Dept. of Mechanical and Industrial Engineering} \\
\textit{University of Massachusetts Amherst}\\
Amherst, MA, USA \\
akhojaste@umass.edu}
\and
\IEEEauthorblockN{Geoffrey Pritchard}
\IEEEauthorblockA{\textit{Dept. of Statistics} \\
\textit{University of Auckland}\\
Auckland, New Zealand \\
g.pritchard@auckland.ac.nz}
\and
\IEEEauthorblockN{Golbon Zakeri}
\IEEEauthorblockA{\textit{Dept. of Mechanical and Industrial Engineering} \\
\textit{University of Massachusetts Amherst}\\
Amherst, MA, USA \\
gzakeri@umass.edu}
}

\maketitle

\begin{abstract}
Forecast value depends not only on accuracy but also on the information
structure available to the operating model. We study a diagnostic
current-endpoint-only forecast for offshore-wind thermal firming: at hour $t$,
the controller observes the current net-demand state $z_t$ and one perfect
future target $z_{t+h}$, but not the intermediate path or earlier endpoint
messages. Unlike rolling path forecasts, these endpoint information sets are
not nested in $h$. We embed the signal in a cyclostationary MDP using quantile
Fourier regression states estimated from ISO New England load and offshore wind
data, and solve annual state-action-frequency LPs for $h=1,\ldots,6$. A
one-hour endpoint forecast reduces annual firming cost by \PctOne{}, while a
six-hour endpoint reduces it by \PctSix{}. The decreasing profile shows that a
single farther endpoint is less actionable for a one-step ramping decision,
without implying that longer rolling forecasts are less valuable.
\end{abstract}

\begin{IEEEkeywords}
Offshore wind integration, thermal firming, quantile regression, value of
information, Markov decision processes, endpoint forecasts.
\end{IEEEkeywords}

\section{Introduction}
Large-scale offshore wind reduces energy-sector emissions, but it also increases
the variability of the residual demand that must be met by controllable
resources. System operators therefore schedule and dispatch \emph{firming}
capacity to cover the shortfall between demand and renewable output. In this
setting, net-demand forecasts are valuable because they allow the operator to
position thermal resources before high net-demand periods arrive. The central
question is not simply whether forecasts help, but which forecast information is
actually useful for the ramping decision made by the firming model.

A common intuition is that a longer perfect forecast should be at least as useful
as a shorter perfect forecast. That intuition is correct when the longer
forecast contains the shorter one. For example, a rolling perfect three-hour path
forecast at hour $t$ would reveal
$(z_{t+1},z_{t+2},z_{t+3})$, and therefore contain the one-hour forecast
$z_{t+1}$. In such a model, the information sets are nested, so the value of
perfect information cannot decrease with the forecast horizon.

This paper studies a different object: the value of a \emph{single perfect
endpoint}. At hour $t$, the operator observes $z_t$ and one endpoint
$z_{t+h}$. For $h=3$, the operator knows the current state and the state three
hours ahead, but not the one- and two-hour-ahead states. Thus, the endpoint
information sets for different $h$ are not nested: knowing $z_{t+3}$ does not
reveal $z_{t+1}$. This distinction is important. The model should not be interpreted as a
rolling three-hour forecast archive, nor as an operational claim that past
forecast messages are unavailable in practice. Instead, it is a controlled
value-of-information experiment that reveals exactly one future target state
and excludes all other future information, thereby isolating how much that
single target state is worth to a one-step ramping model.

The endpoint formulation is useful because it separates two questions that are
often conflated. Forecasts are commonly evaluated by target horizon, while an
operating model values information only through the decisions it changes. By
adding one target state $z_{t+h}$ to the decision state, the endpoint model asks:
how valuable is perfect knowledge of this particular future hour? It also avoids
the exponential state growth of a full path-valued forecast. A rolling path
forecast with $M$ net-demand states would require the net-demand component
$(z_t,z_{t+1},\ldots,z_{t+h})$, which has $M^{h+1}$ possible values. The
endpoint model uses only $(z_t,z_{t+h})$, so this component has $M^2$ values for
every horizon tested.

This work is related to three strands of literature. First, renewable forecast
value has been studied in stochastic unit commitment, reserve scheduling, and
market-operation models, where forecast scenarios or probabilistic forecasts
enter larger scheduling problems \cite{Morales,Zhou,Hobbs}. Second, the expected
value of perfect information is a classical tool in stochastic programming
\cite{Birge,Fourcade}. Third, forecast-aware sequential decision models and
Markov decision processes provide a natural way to represent information in the
state of the controller \cite{Puterman,Powell}. Our contribution is narrower but
more diagnostic: we isolate a single endpoint signal inside a cyclostationary
firming MDP and quantify how its value changes when the endpoint target moves
away from the one-hour ramping decision.

We quantify this endpoint value inside the cyclostationary firming MDP of
\cite{Khojaste2025-jh}. In that model, net demand is discretized by
quantile Fourier regression (QFR), producing a finite-state net-demand process
whose state definitions and transition probabilities vary periodically over the
year. QFR-based MDP models have also been used in other energy-planning
settings, including electricity price-aware data-center cooling and hydropower
reservoir operation, where periodic quantile regimes are used to construct
tractable Markovian state representations~\cite{Khojaste2025-lo,Pearce2025-fm}.
The firming resource is a thermal stack with discrete output levels and a
one-level-per-hour ramp limit. The baseline decision state is the pair consisting
of firming level and current net-demand state. We add exactly one coordinate: a
perfect forecast of the net-demand state $h$ hours ahead. Costs, ramp limits,
and the net-demand data are held fixed across all runs, so the difference in
optimal objective values measures the value of the endpoint information.

The main contributions are as follows:
\begin{itemize}
\item We formulate a forecast-augmented cyclostationary MDP in which a perfect
endpoint forecast $z_{t+h}$ is included directly in the decision state.
\item We give a precise interpretation of the information structure and
separate endpoint forecast value from rolling path forecast value.
\item We compute the value of perfect endpoint information for $h=1,\ldots,6$ on
an ISO New England offshore-wind firming case study.
\item We show empirically that endpoint value is largest when the endpoint
matches the one-hour ramping step. In the case study, the saving decreases from
\PctOne{} at $h=1$ to \PctSix{} at $h=6$.
\end{itemize}

The rest of the paper is organized as follows. Section~\ref{sec:model} reviews
the baseline cyclostationary firming MDP. Section~\ref{sec:endpoint} introduces
the endpoint forecast state and transition kernel. Section~\ref{sec:value}
defines the value of perfect endpoint information and explains why different
endpoint horizons are not ordered by a general monotonicity theorem.
Section~\ref{sec:case} describes the ISO-NE case study. Section~\ref{sec:results}
reports numerical results, Section~\ref{sec:limitations} discusses scope and
limitations, and Section~\ref{sec:conclusion} concludes.


\section{Baseline Cyclostationary Firming MDP}
\label{sec:model}

\subsection{Net-demand states}
Let $y_t$ denote hourly net demand, defined as electric load minus offshore-wind
generation. Net demand has strong daily and seasonal structure, so we model it as
a cyclostationary process over an annual period of $N=8760$ hours. Following
\cite{Khojaste2025-jh}, the continuous net-demand series is discretized
using QFR. For a probability level $p\in(0,1)$, the time-dependent quantile is
represented as
\begin{equation}
q_p(t)=\sum_{r=1}^{d}\beta_{p,r} b_r(t),
\label{eq:qfr}
\end{equation}
where $b_r(t)$ are periodic Fourier basis functions and the coefficients are
estimated by quantile regression \cite{Koenker}. Using quantile levels
$p_1<\cdots<p_{M-1}$, the net-demand state is
\begin{equation}
z_t=i
\quad\Longleftrightarrow\quad
q_{p_{i-1}}(t)\le y_t<q_{p_i}(t),
\qquad i\in\mathcal{I},
\label{eq:state}
\end{equation}
with $q_{p_0}(t)=-\infty$ and $q_{p_M}(t)=+\infty$. In the case study we use
$M=4$ states, separated by the conditional 0.25, 0.50, and 0.75 quantiles. Thus,
state labels have time-varying meanings: state 4 represents high net demand for
that hour of the day and season, not a fixed MW interval.

The baseline net-demand transition matrix is time-inhomogeneous and periodic:
\begin{align}
p_{ij}(t)
&=\Pr(z_{t+1}=j\mid z_t=i) \notag\\
&=\alpha_{ij}+\beta^c_{ij}\cos\!\left(\frac{2\pi t}{N}\right) \notag\\
&\quad +\beta^s_{ij}\sin\!\left(\frac{2\pi t}{N}\right).
\label{eq:baseline_kernel}
\end{align}
The coefficients are estimated by maximum likelihood subject to
$p_{ij}(t)\ge0$ and $\sum_j p_{ij}(t)=1$ for every hour $t$.
Following \cite{Khojaste2025-jh}, the QFR discretization uses $r=2$
harmonics ($d=5$ basis terms in \eqref{eq:qfr}), and all periodic transition
kernels are estimated by maximum likelihood with row-wise nonnegativity and unit
row sums enforced through a constrained nonlinear program at every hour $t$.

\subsection{Firming levels, actions, and cost}
The firming resource is a thermal stack represented by $L_R$ discrete capacity
levels. Level $\ell$ provides firm capacity $Q(\ell)$, increasing in $\ell$. At
each hour the operator chooses
\begin{equation}
a_t\in\mathcal{A}=\{-1,0,1\},
\end{equation}
where $a_t=1$ ramps the firming level up by one step, $a_t=0$ holds it, and
$a_t=-1$ ramps it down by one step. Boundary actions that would move below
level 1 or above level $L_R$ are infeasible and are omitted from the balance
equations.

The hourly cost incurred when the system is at hour $t$, firming level $\ell$,
and realized net-demand state $z$ is
\begin{equation}
c(t,\ell,z)=c_{\mathrm r}Q(\ell)
+c_{\mathrm d}\bigl(D_t(z)-Q(\ell)\bigr)^+,
\label{eq:cost}
\end{equation}
where $D_t(z)$ is the representative net demand in state $z$ at hour $t$,
$c_{\mathrm r}$ is the per-unit cost of holding firm capacity, and
$c_{\mathrm d}$ is the penalty for unmet net demand. The action selected at hour
$t$ determines the firming level available at hour $t+1$. Therefore, the current
action is valuable only to the extent that it positions the resource for the
next realized net-demand state.

\subsection{State-action frequency LP}
Without forecasts, the decision state is $(\ell,z)$. The optimal periodic policy
can be computed from a state-action frequency linear program. Let
$x_{t,\ell,z,a}$ be the probability of being in state $(\ell,z)$ and taking
action $a$ at hour $t$ in the periodic regime. The baseline LP is
\begin{subequations}\label{eq:baseline_lp}
\begin{align}
\min_{x\ge0}\quad
&\sum_t\sum_{\ell,z,a}c(t,\ell,z)x_{t,\ell,z,a}
\label{eq:base_obj}\\
\text{s.t.}\quad
&\sum_{\ell,z,a}x_{t,\ell,z,a}=1,\qquad \forall t,
\label{eq:base_norm}\\
&\sum_a x_{t+1,\ell,z,a}
-\sum_{z'}p_{z'z}(t)\bigl(
 x_{t,\ell,z',0} \notag\\
&\qquad
+x_{t,\ell+1,z',-1}
+x_{t,\ell-1,z',1}
\bigr)=0,
\label{eq:base_balance}
\end{align}
\end{subequations}
for all valid $t$, $\ell$, and $z$, with invalid boundary predecessor terms
omitted. Hour $N+1$ is identified with hour 1. The optimal randomized policy is
obtained by normalizing $x^\star$ over actions.

\section{Endpoint-Forecast Augmentation}
\label{sec:endpoint}

\subsection{Information structure}
For a fixed endpoint horizon $h$, define
\begin{equation}
e_t^{(h)}=z_{t+h}.
\end{equation}
The forecast-augmented decision state is
\begin{equation}
(\ell_t,z_t,e_t^{(h)}).
\label{eq:aug_state}
\end{equation}
This is the only structural change from the baseline model. The cost function,
action set, ramp dynamics, and net-demand data are unchanged.

For $h>1$, the controller observes neither the intermediate states
$(z_{t+1},\ldots,z_{t+h-1})$ nor previous endpoint messages. Thus, for
$h=3$, the model observes $(z_t,z_{t+3})$, not the full path
$(z_t,z_{t+1},z_{t+2},z_{t+3})$.

This distinction is the core interpretation of the paper. The endpoint
information sets
\begin{equation}
\sigma(z_t,z_{t+1}),\quad \sigma(z_t,z_{t+2}),\quad \ldots,\quad
\sigma(z_t,z_{t+h})
\end{equation}
are generally not nested. Consequently, a longer endpoint is not automatically
more valuable than a shorter endpoint. A rolling path forecast would have a
different state and a different monotonicity property; we return to this point in
Section~\ref{sec:value}.

\subsection{Joint endpoint transition kernel}
For a fixed $h$, the augmented demand state is the pair
$(z_t,e_t^{(h)})=(z_t,z_{t+h})$. Its one-step transition is
\begin{equation}
(z_t,z_{t+h})\longrightarrow (z_{t+1},z_{t+1+h}).
\end{equation}
We estimate the time-dependent transition probability
\begin{align}
P_t\lbrack(i,i')\to(j,j')\rbrack
&=\Pr\{(z_{t+1},z_{t+1+h})=(j,j') \notag\\
&\qquad\mid (z_t,z_{t+h})=(i,i')\} \notag\\
&=\alpha_{ii'jj'}+\beta^c_{ii'jj'}\cos\!\left(\frac{2\pi t}{N}\right) \notag\\
&\quad+\beta^s_{ii'jj'}\sin\!\left(\frac{2\pi t}{N}\right).
\label{eq:endpoint_kernel}
\end{align}
The coefficients are fitted by maximum likelihood from the observed transitions
$(z_t,z_{t+h})\to(z_{t+1},z_{t+1+h})$, separately for each $h$. The estimation is
constrained so that each row of $P_t$ is a probability distribution for every
hour $t$:
\begin{equation}
P_t\lbrack(i,i')\to(j,j')\rbrack\ge0,
\qquad
\sum_{j,j'}P_t\lbrack(i,i')\to(j,j')\rbrack=1.
\end{equation}
For $h=1$, the perfect endpoint is the next net-demand state, so admissible
transitions have a deterministic support condition: the next current state must
equal the previous endpoint. In the notation above,
$P_t\lbrack(i,i')\to(j,j')\rbrack=0$ whenever $j\ne i'$. This restriction is
part of the perfect one-hour endpoint specification.

For a clean value-of-information comparison with the no-forecast baseline, row
stochasticity of $P_t$ is not by itself sufficient. The current-state marginal
induced by the endpoint process should also be consistent with the baseline
one-step chain. If $\rho_t^{(h)}(e\mid i)$ denotes the hour-$t$ conditional
distribution of the endpoint coordinate given $z_t=i$, this condition can be
written as
\begin{equation}
\sum_{e,e'}\rho_t^{(h)}(e\mid i)
P_t\lbrack(i,e)\to(j,e')\rbrack=p_{ij}(t),
\qquad \forall i,j,t.
\label{eq:marginal_consistency}
\end{equation}
The non-negativity result in Section~\ref{sec:value} is stated under this
marginal-consistency condition. Because each endpoint kernel is fitted
separately by horizon, a nonnegative $\mathrm{VPI}_{\mathrm{end}}(h)$ should be
interpreted as an objective comparison for that fitted model, and as a
consequence of Proposition~\ref{prop:nonnegative} only when
\eqref{eq:marginal_consistency} holds or has been verified.

\subsection{Forecast-augmented LP}
Let $x_{t,\ell,z,e,a}$ be the state-action frequency for hour $t$, firming level
$\ell$, current net-demand state $z$, endpoint forecast state $e$, and action
$a$. The endpoint-forecast LP is
\begin{subequations}\label{eq:endpoint_lp}
\begin{align}
\min_{x\ge0}\quad
&\sum_t\sum_{\ell,z,e,a}c(t,\ell,z)x_{t,\ell,z,e,a}
\label{eq:end_obj}\\
\text{s.t.}\quad
&\sum_{\ell,z,e,a}x_{t,\ell,z,e,a}=1,
\qquad \forall t,
\label{eq:end_norm}\\
&\sum_a x_{t+1,\ell,z,e,a}
-\sum_{z',e'}P_t\lbrack(z',e')\to(z,e)\rbrack
\bigl(
 x_{t,\ell,z',e',0} \notag\\
&\qquad
+x_{t,\ell+1,z',e',-1}
+x_{t,\ell-1,z',e',1}
\bigr)=0.
\label{eq:end_balance}
\end{align}
\end{subequations}
As in the baseline LP, invalid boundary terms are omitted and hour $N+1$ wraps
to hour 1. The forecast coordinate affects the objective only through the
policy: the cost in \eqref{eq:end_obj} depends on the realized current state
$z$, not directly on the forecast state $e$.

For each endpoint horizon $h$, solving \eqref{eq:endpoint_lp} yields an optimal
annual cost $C_h$. The no-forecast baseline cost is denoted by $C_0$.

\section{Value of Perfect Endpoint Information}
\label{sec:value}

\subsection{Definition}
We define the value of perfect endpoint information at horizon $h$ as
\begin{equation}
\mathrm{VPI}_{\mathrm{end}}(h)=C_0-C_h.
\label{eq:vpi_end}
\end{equation}
This is the reduction in optimal expected annual firming cost obtained by adding
the endpoint state $z_{t+h}$ to the controller's information.

\subsection{Forecast information cannot hurt the baseline}
\begin{proposition}\label{prop:nonnegative}
Suppose the endpoint kernel has the same one-step net-demand marginal as the
baseline model. Then $C_h\le C_0$ and
$\mathrm{VPI}_{\mathrm{end}}(h)\ge0$ for every fixed endpoint horizon $h$.
\end{proposition}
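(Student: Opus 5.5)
The plan is to show that the endpoint LP \eqref{eq:endpoint_lp} contains a feasible point whose objective equals $C_0$. Since $C_h$ is a minimum over all feasible points, this gives $C_h\le C_0$, and hence $\mathrm{VPI}_{\mathrm{end}}(h)\ge0$ by \eqref{eq:vpi_end}. The natural candidate is the lift of an optimal baseline solution that simply ignores the forecast. Let $x^\star$ be optimal for \eqref{eq:baseline_lp}. Let $\rho_t^{(h)}(e\mid z)$ be the conditional endpoint distribution at hour $t$ appearing in \eqref{eq:marginal_consistency}. Define
\begin{equation*}
\tilde x_{t,\ell,z,e,a}=x^\star_{t,\ell,z,a}\,\rho_t^{(h)}(e\mid z).
\end{equation*}
This is the state-action frequency of a controller that sees $(\ell,z,e)$ but uses the baseline randomized policy, which does not depend on $e$.

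Nonnegativity of $\tilde x$ and the normalization \eqref{eq:end_norm} follow at once, because $\sum_e\rho_t^{(h)}(e\mid z)=1$. For the objective, the cost $c(t,\ell,z)$ does not depend on $e$, so summing over $e$ returns exactly $\sum_t\sum_{\ell,z,a}c(t,\ell,z)x^\star_{t,\ell,z,a}=C_0$.

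The main step is the balance constraint \eqref{eq:end_balance}. Write $y_t(\ell,z')$ for the inflow combination $x^\star_{t,\ell,z',0}+x^\star_{t,\ell+1,z',-1}+x^\star_{t,\ell-1,z',1}$. After substituting $\tilde x$, the inflow to $(\ell,z,e)$ at hour $t+1$ is
\begin{equation*}
\sum_{z'}y_t(\ell,z')\sum_{e'}\rho_t^{(h)}(e'\mid z')P_t\lbrack(z',e')\to(z,e)\rbrack .
\end{equation*}
The left-hand side of the balance constraint is $\sum_a x^\star_{t+1,\ell,z,a}\,\rho_{t+1}^{(h)}(e\mid z)$. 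Summed over $e$, the two sides agree by \eqref{eq:marginal_consistency} and the baseline balance \eqref{eq:base_balance}.

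The difficulty is equality for each individual $e$. This requires that the pushforward of $\rho_t^{(h)}(\cdot\mid z')$ through $P_t$, conditioned on the next current state $z$, equals $\rho_{t+1}^{(h)}(\cdot\mid z)$. It must also hold separately for each $\ell$, since the weights $y_t(\ell,z')$ differ across levels. I would handle this by reading the hypothesis ``same one-step net-demand marginal'' in its natural sense: the endpoint kernel is the true joint kernel of $(z_t,z_{t+h})$, and the $\rho_t^{(h)}$ are the corresponding conditionals. Under that reading, the conditional of $z_{t+1+h}$ given $z_{t+1}=z$ is $\rho_{t+1}^{(h)}$ regardless of which $z'$ it came from. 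That holds whenever the future endpoint is conditionally independent of $z_t$ given $z_{t+1}$, and in particular when $z$ is Markov.

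If only the weaker aggregate condition \eqref{eq:marginal_consistency} is available, I would fall back on a policy-based argument. Take the baseline randomized policy, run it in the augmented model ignoring $e$, and let $\tilde x$ be the resulting periodic occupation measure. This $\tilde x$ is feasible by construction. Its $(\ell,z)$-marginal obeys the baseline recursion because of \eqref{eq:marginal_consistency}, provided the endpoint conditionals are self-consistent along the chain. So its cost again equals $C_0$.

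Either way, a feasible point of cost $C_0$ exists, and minimization yields $C_h\le C_0$.
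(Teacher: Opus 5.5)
Your core idea matches the paper's: lift a baseline solution that ignores $e$, check that it is feasible for \eqref{eq:endpoint_lp} with cost $C_0$, and minimize. You lift occupation measures, $\tilde x_{t,\ell,z,e,a}=x^\star_{t,\ell,z,a}\rho_t^{(h)}(e\mid z)$. The paper instead lifts a policy and simply asserts that, under \eqref{eq:marginal_consistency}, it ``induces the same distribution over $(\ell,z)$.'' Yours is the more careful version. You correctly see that \eqref{eq:marginal_consistency} gives the balance equation only after summing over $e$. What is actually needed, separately for every $z'$ because the weights $y_t(\ell,z')$ vary with $\ell$, is
\begin{equation*}
\sum_{e'}\rho_t^{(h)}(e'\mid z')\,P_t\lbrack(z',e')\to(z,e)\rbrack=p_{z'z}(t)\,\rho_{t+1}^{(h)}(e\mid z)\qquad\forall\, z',z,e,t .
\end{equation*}
Suppose $P_t$ is the exact conditional law of $(z_{t+1},z_{t+1+h})$ given $(z_t,z_{t+h})$ for a Markov chain with kernel $p$. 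Then the left side equals $\Pr(z_{t+1}=z,\,z_{t+1+h}=e\mid z_t=z')$ by total probability, and this factors as the right side by the Markov property. Under that reading your first argument is a complete proof. The paper's policy lift silently needs the same property: under the lifted policy, $e_t$ must have law $\rho_t^{(h)}(\cdot\mid z_t)$ independently of $\ell_t$.

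The weak point is your fallback. The clause ``provided the endpoint conditionals are self-consistent along the chain'' is just the displayed condition restated. So the policy route does not run on \eqref{eq:marginal_consistency} alone, and ``either way'' overclaims.

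In fact no argument can work from \eqref{eq:marginal_consistency} alone. Take $h\ge2$ (no support condition), $M=2$, $L_R=3$, $Q(\ell)=\ell-1$, $D(1)=0$, $D(2)=2$, $c_{\mathrm r}=0.1$, $c_{\mathrm d}=3$, and constant kernels. Let the baseline have $p_{12}=a$ and $p_{21}=\tfrac12$. Let the endpoint chain move $(1,1)\to(2,1)$ with probability $a$ (otherwise stay), then $(2,1)\to(2,2)\to(1,1)$ deterministically. Then $\rho(\cdot\mid1)=\delta_1$ and $\rho(\cdot\mid2)=(\tfrac12,\tfrac12)$, so \eqref{eq:marginal_consistency} holds. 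But high spells now last exactly two hours, instead of a geometric time with mean two.

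Now compare costs for small $a$, where holding spare capacity through the long low spells does not pay. The best augmented policy pays $3c_{\mathrm d}+c_{\mathrm r}=9.1$ per spell, even though it sees $e$. In the baseline, the policy ``ramp up while high'' pays $2.5c_{\mathrm d}+3.5c_{\mathrm r}=7.85$ per spell, and spells occur at the same frequency. Hence $C_h>C_0$.

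Your strengthened hypothesis is therefore necessary, not a convenience, and it belongs in the statement of the proposition. Note also that kernels fitted separately by horizon need not satisfy it.
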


\begin{IEEEproof}
Take any feasible no-forecast policy $d_{t,\ell,z,a}$. In the endpoint-forecast
model, define a policy that ignores the endpoint coordinate:
\begin{equation}
\tilde d_{t,\ell,z,e,a}=d_{t,\ell,z,a}\qquad \forall e.
\end{equation}
Under the stated marginal-consistency condition, this lifted policy induces the
same distribution over $(\ell,z)$ and the same expected cost as the original
policy. Therefore the forecast-augmented feasible policy class contains a policy
with cost $C_0$. Since $C_h$ is the minimum over the augmented class,
$C_h\le C_0$.
\end{IEEEproof}

Proposition~\ref{prop:nonnegative} is not a monotonicity theorem across
endpoint horizons. It only says that, for a fixed $h$, adding the endpoint
coordinate cannot make the optimum worse than a model that ignores it.

\subsection{Why endpoint horizons are not ordered}
For rolling path forecasts, the information sets are nested:
\begin{equation}
\sigma(z_t,z_{t+1})\subseteq
\sigma(z_t,z_{t+1},z_{t+2})\subseteq\cdots.
\end{equation}
Thus, under a full path forecast, the value of perfect information would be
weakly nondecreasing with horizon.

For endpoint forecasts, the information sets are instead
\begin{equation}
\sigma(z_t,z_{t+1}),\quad
\sigma(z_t,z_{t+2}),\quad
\sigma(z_t,z_{t+3}),\quad\ldots,
\end{equation}
which are generally non-nested. A three-hour endpoint forecast does not reveal
the one-hour endpoint. Therefore the classical argument that ``more perfect
lookahead is always better'' does not apply.

This is exactly why endpoint value can decline with $h$. The action chosen at
hour $t$ changes the firming level available at hour $t+1$. Hence the most
actionable future state for the current decision is $z_{t+1}$. When $h=1$, the
endpoint forecast reveals that state exactly. When $h=6$, the endpoint forecast
reveals $z_{t+6}$, which may be correlated with $z_{t+1}$ but does not determine
it. The farther endpoint can still be useful, but it is less directly aligned
with the one-step ramping decision.

\section{ISO New England Case Study}
\label{sec:case}

We use the ISO New England offshore-wind firming setting of
\cite{Khojaste2025-jh}. Hourly regional load for 2006--2020 is taken
from FERC Form 714 \cite{FERC}. Offshore-wind generation is computed from hourly
wind speeds at NOAA buoy 44025 \cite{NOAA}, converted to generation using the
IEA 15 MW reference turbine power curve \cite{IEA15}, and scaled to an offshore
wind capacity of 21,687 MW. Net demand is load minus offshore-wind generation.

The QFR state model uses $M=4$ net-demand states defined by the 0.25, 0.50, and
0.75 conditional quantiles. The firming stack has $L_R=14$ capacity levels and
action set $\{-1,0,1\}$. The cost parameters are shown in Table~\ref{tab:params}.
The coefficients $c_{\mathrm r}$ and $c_{\mathrm d}$ are normalized objective
weights rather than dollar-denominated prices. Accordingly, the absolute costs
reported below are in the normalized units of \eqref{eq:cost}; percentage
savings and cost differences under identical weights are the primary economic
comparisons.

\begin{table}[t]
\centering
\caption{Firming MDP parameters used in the case study.}
\label{tab:params}
\begin{tabular}{lll}
\toprule
Symbol & Meaning & Value \\
\midrule
$N$ & Hours in annual cycle & 8760 \\
$M$ & Net-demand states & 4 \\
$L_R$ & Firming levels & 14 \\
$\mathcal{A}$ & Ramp actions & $\{-1,0,1\}$ \\
$c_{\mathrm r}$ & Firm-capacity cost coefficient & 0.1 \\
$c_{\mathrm d}$ & Unserved-demand penalty coefficient & 3 \\
$h$ & Endpoint horizons tested & $1,\ldots,6$ hours \\
\bottomrule
\end{tabular}
\end{table}

For each endpoint horizon $h$, we estimate the joint endpoint kernel
\eqref{eq:endpoint_kernel} from the paired state sequence
$(z_t,z_{t+h})\to(z_{t+1},z_{t+1+h})$ and solve \eqref{eq:endpoint_lp}. Each
forecast-augmented LP has
$N L_R M^2 |\mathcal{A}|\approx 5.9\times10^6$ variables, while the no-forecast
baseline has $N L_R M |\mathcal{A}|$ variables. The transition kernels are sparse
and reused across firming levels. The instances are solved with Gurobi's barrier
method, and all reported costs are optimal objective values certified by the
solver within its feasibility and optimality tolerances. As a consistency check, we compared the current-state marginals implied by each
fitted endpoint kernel with the baseline one-step QFR transition matrix on the
same state sequence. Across $h=1,\ldots,6$, the maximum absolute discrepancy was
$0.0199$ and the mean absolute discrepancy was $0.0026$.

\section{Results}
\label{sec:results}

Table~\ref{tab:vpi} gives the optimal annual firming cost and value of perfect
endpoint information for each horizon. Figure~\ref{fig:vpi} shows the same
results graphically.

\begin{table}[t]
\centering
\caption{Annual firming cost and value of perfect endpoint information.}
\label{tab:vpi}
\begin{tabular}{cccc}
\toprule
Endpoint $h$ & Cost $C_h$ & $\mathrm{VPI}_{\mathrm{end}}(h)$ & Saving \\
(hours) & & $=C_0-C_h$ & (\%) \\
\midrule
0 & \CostBase  & --         & -- \\
1 & \CostOne   & \VpiOne    & \PctOne \\
2 & \CostTwo   & \VpiTwo    & \PctTwo \\
3 & \CostThree & \VpiThree  & \PctThree \\
4 & \CostFour  & \VpiFour   & \PctFour \\
5 & \CostFive  & \VpiFive   & \PctFive \\
6 & \CostSix   & \VpiSix    & \PctSix \\
\bottomrule
\end{tabular}
\end{table}

\begin{figure}[t]
\centering
\begin{tikzpicture}
\begin{axis}[
    width=\columnwidth, height=5.0cm,
    xlabel={Endpoint horizon $h$ (hours)},
    ylabel={$\mathrm{VPI}_{\mathrm{end}}(h)$},
    xmin=0.5, xmax=6.5, xtick={1,2,3,4,5,6},
    ymajorgrids, grid style={gray!25}, mark size=2pt,
]
\addplot+[thick, mark=*] coordinates {
    (1, 1049.00) (2, 613.50) (3, 503.42) (4, 400.39) (5, 334.50) (6, 296.92)
};
\end{axis}
\end{tikzpicture}
\caption{Value of perfect endpoint information as a function of endpoint
horizon. The value is largest when the endpoint is the next dispatch hour.}
\label{fig:vpi}
\end{figure}


\subsection{Endpoint information has positive value}
All endpoint forecast runs reduce cost relative to the no-forecast baseline:
\begin{equation}
C_h<C_0,\qquad h=1,\ldots,6.
\end{equation}
This is consistent with Proposition~\ref{prop:nonnegative}. In the numerical
solutions, every $\mathrm{VPI}_{\mathrm{end}}(h)$ in Table~\ref{tab:vpi} is
strictly positive. Even the six-hour endpoint forecast, which is not directly
aligned with the next ramping step, reduces annual cost by \VpiSix{}, or
\PctSix{} of the baseline objective.

\subsection{The nearest endpoint is most valuable}
The endpoint values are strictly ordered in the case study:
\begin{equation}
\mathrm{VPI}_{\mathrm{end}}(1)>
\mathrm{VPI}_{\mathrm{end}}(2)>
\cdots>
\mathrm{VPI}_{\mathrm{end}}(6)>0.
\end{equation}
Although $h=1$ is a boundary case of the endpoint construction because
$e_t=z_{t+1}$ and admissible transitions satisfy the deterministic support
condition $j=i'$, the decreasing pattern is not driven solely by this special
case. Over $h=2,\ldots,6$, where the endpoint no longer determines the next
current state, $\mathrm{VPI}_{\mathrm{end}}(h)$ still decreases from
\VpiTwo{} to \VpiSix{}.
A one-hour endpoint forecast reduces annual firming cost by \VpiOne{}, or
\PctOne{}. A six-hour endpoint forecast reduces cost by \VpiSix{}, or \PctSix{}.
Thus the six-hour endpoint retains only about 28\% of the value of the one-hour
endpoint.

This pattern does not contradict the value-of-information principle,
because the endpoint information sets are not nested. It reflects the timing of
the firming decision. The current action controls the firming level one hour in
the future, so the most useful endpoint is $z_{t+1}$. Farther endpoints still
carry statistical information about the evolution of net demand, but they do not
reveal the state that is immediately affected by the current ramp action.

\subsection{Policy interpretation}
The cost reductions in Table~\ref{tab:vpi} are reflected in the optimal ramping
rules. Figure~\ref{fig:policy_slices} compares the no-forecast baseline policy
with two slices of the one-hour endpoint policy on a representative summer day.
The no-forecast policy conditions on $(\ell_t,z_t)$ only. By contrast, the
one-hour endpoint policy conditions on $(\ell_t,z_t,e_t)$, where
$e_t=z_{t+1}$ is the forecasted next net-demand state. Thus the same current
state $(\ell_t,z_t)$ may lead to different actions depending on the predicted
next state.

\begin{figure*}[!t]
\centering
\includegraphics[width=0.56\textwidth]{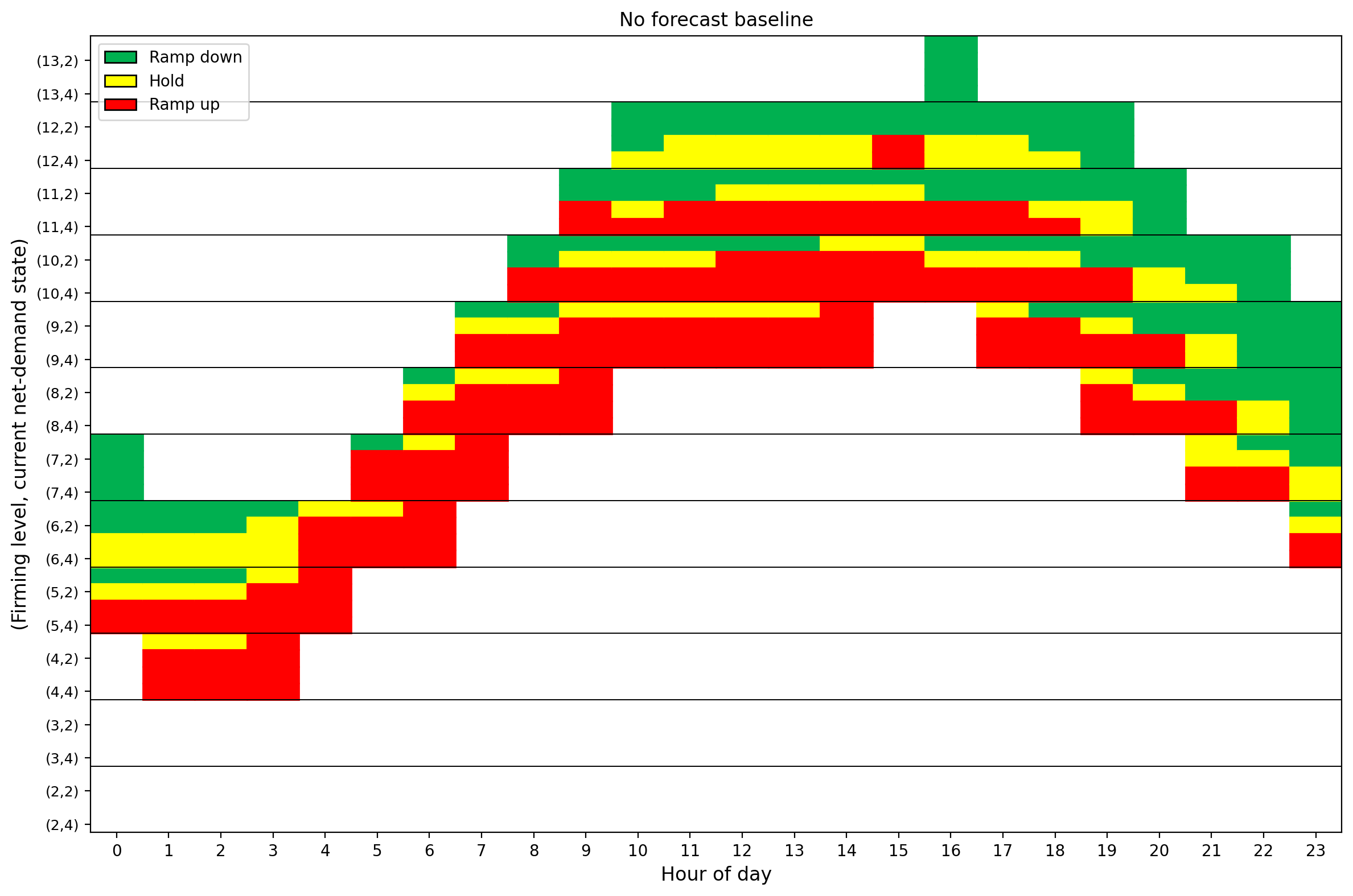}

\vspace{0.5em}

\begin{minipage}[t]{0.49\textwidth}
\centering
\includegraphics[width=\linewidth]{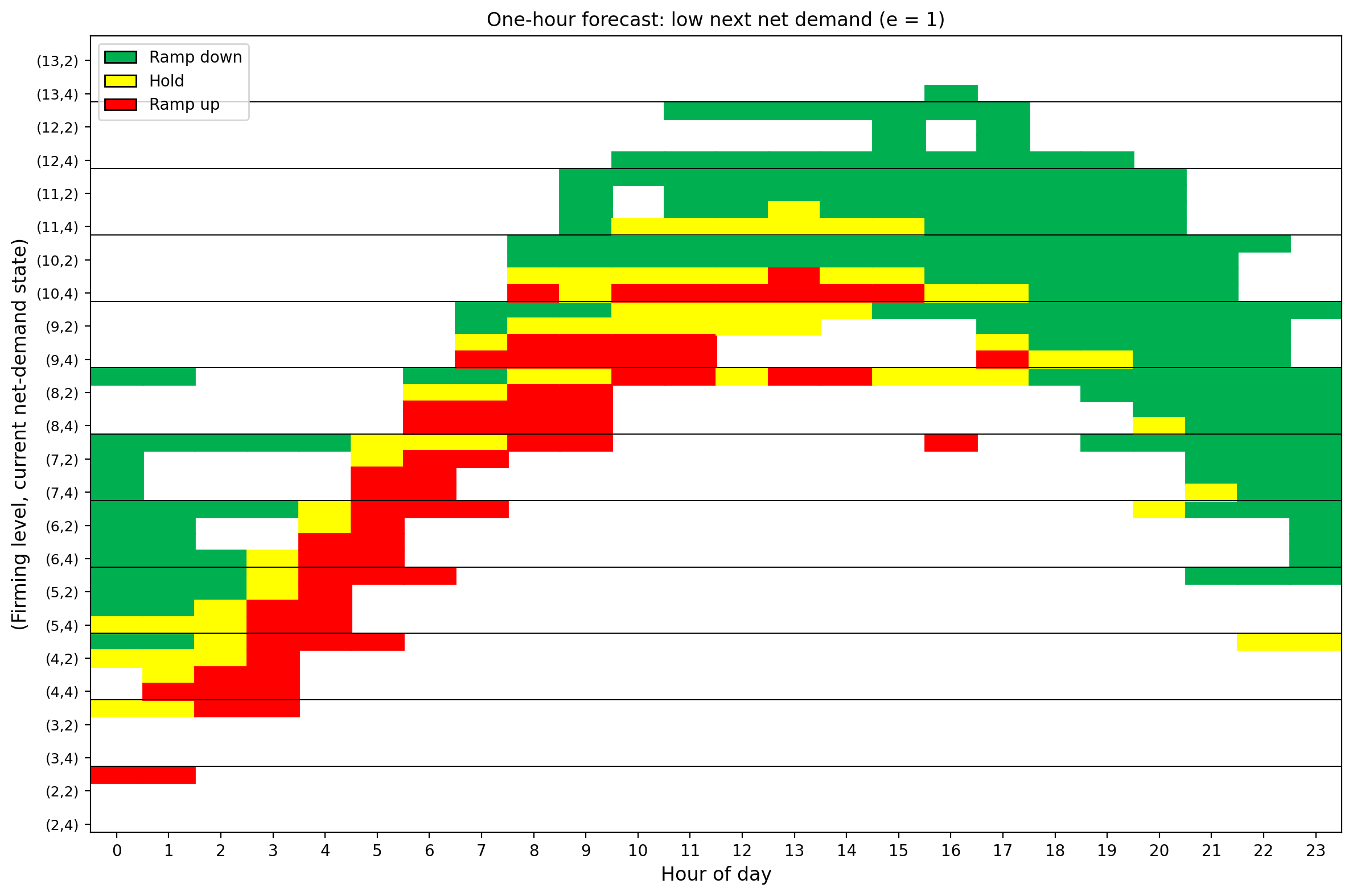}
\end{minipage}\hfill
\begin{minipage}[t]{0.49\textwidth}
\centering
\includegraphics[width=\linewidth]{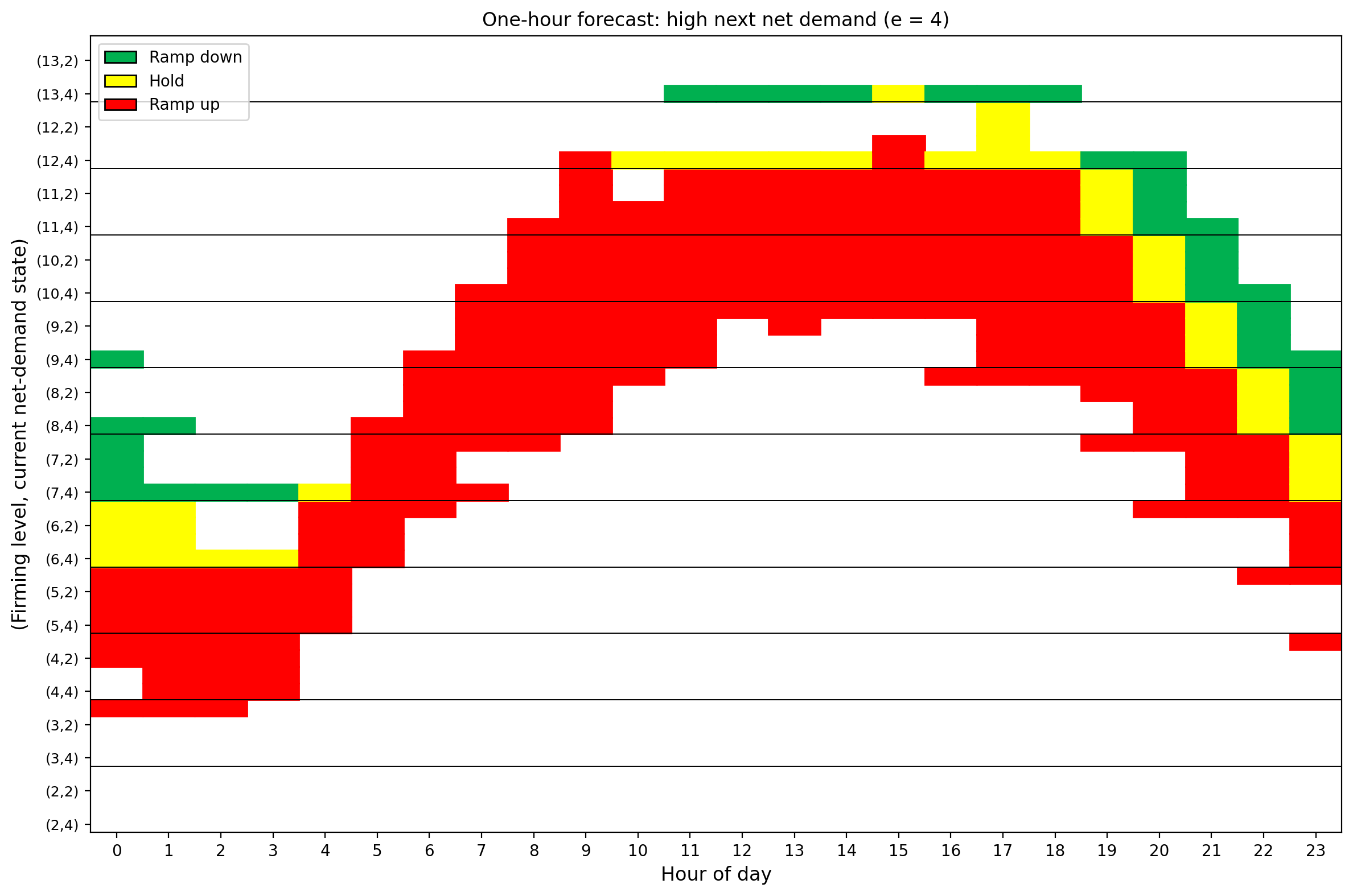}
\end{minipage}
\caption{Optimal ramping policies on July 15. The top panel shows the
no-forecast baseline, which conditions only on the firming level and current
net-demand state $(\ell_t,z_t)$. The bottom panels show two slices of the
one-hour perfect endpoint policy, which also conditions on the forecasted
next net-demand state $e_t=z_{t+1}$. The vertical axis lists firming-level/current-state
pairs that appear in the plotted support on this representative day. White cells are
state-hour pairs outside that support; colored cells show the selected ramp action
where the policy is defined. When the
next state is forecast to be low ($e=1$), the controller ramps down or holds
over a larger part of the state space. When the next state is forecast to be
high ($e=4$), the ramp-up region expands. Green, yellow, and red denote ramp
down, hold, and ramp up, respectively.}
\label{fig:policy_slices}
\end{figure*}

Figure~\ref{fig:policy_slices} illustrates why the one-hour endpoint has the
largest value. The action chosen at hour $t$ determines the firming level at
hour $t+1$. Therefore, direct information about $z_{t+1}$ changes the optimal
ramping rule in the current hour: a high predicted next state shifts the rule
toward ramping up, while a low predicted next state shifts it toward holding or
ramping down.

\subsection{Implications for forecast evaluation}
The result suggests a practical caution for planners. Forecast value should be
measured using the same information structure and timing as the decision model
that consumes the forecast. If the operational product is a rolling path forecast,
then the correct state should include the path or a sufficient statistic of that
path, and forecast value should be weakly increasing with horizon. If the product
or experiment supplies only one target hour, then the endpoint value may peak at
the target that is closest to the actuation delay of the resource.

For the thermal firming system studied here, the actuation delay is one hour and
one ramp level. Therefore, endpoint information is most valuable at $h=1$. A
resource with faster ramping, storage-like intertemporal flexibility, or a
multi-hour commitment delay could produce a different endpoint-value profile.
The method developed here can be reused for those technologies by changing the
state and transition dynamics.

\section{Scope and Limitations}
\label{sec:limitations}

The study has three main limitations. First, endpoint forecasts are perfect, so
the reported values are upper bounds for imperfect forecast products. Second,
the endpoint-only state is a diagnostic information structure rather than an
operational forecast archive; in practice, previously issued forecasts may be
retained, whereas here they are excluded to isolate the value of one future
target state. Third, the firming model uses one aggregate thermal stack with
one-level-per-hour ramping and omits unit commitment, startup costs, minimum
up/down constraints, network constraints, and co-optimized reserves. These
features could change which forecast horizons are most actionable.

\section{Conclusion}
\label{sec:conclusion}

This paper quantified the value of perfect endpoint forecasts for offshore-wind
thermal firming in a cyclostationary MDP. The forecast-augmented model adds one
coordinate to the baseline state: the net-demand state $h$ hours ahead. This
state is intentionally endpoint-only and does not represent a rolling path
forecast. Under this information structure, endpoint horizons are non-nested, so
there is no theorem requiring farther endpoints to be more valuable.

In the ISO New England case study, all endpoint forecasts reduce cost relative
to the no-forecast baseline, but the nearest endpoint is the most valuable. The
one-hour endpoint forecast reduces annual firming cost by \PctOne{}, while the
six-hour endpoint forecast reduces it by \PctSix{}. The result is explained by
the one-hour ramping dynamics: the current action positions the firming level for
hour $t+1$, so the endpoint $z_{t+1}$ is directly actionable, while a farther
endpoint only indirectly informs that decision.

The broader message is that forecast value must be matched to the timing and
information structure of the decision problem. Endpoint forecasts and rolling
path forecasts answer different economic questions. Future work will combine
endpoint forecasts with risk-averse MDP objectives such as CVaR, building on
recent risk-aware grid and reservoir models~\cite{Khojaste2026-jc}. We
will also study imperfect forecasts and compact path-valued approximations.

\section*{Acknowledgment}
This material is based upon work supported by the U.S. Department of Energy's
Office of Energy Efficiency and Renewable Energy (EERE) under the Wind Energy
Technologies Office (WETO) Award Number DE-EE0011269, the Massachusetts Clean
Energy Center and the Maryland Energy Administration. The views expressed herein
do not necessarily represent the views of the U.S. Department of Energy, the
United States Government, the Massachusetts Clean Energy Center or the Maryland
Energy Administration.

\bibliographystyle{IEEEtran}
\bibliography{references}

\end{document}